\title[$C^{2,\alpha}$ estimates for twisted type equations]{$C^{2,\alpha}$ estimates for nonlinear elliptic equations of twisted type}
\author[T.C. Collins]{Tristan C. Collins}
\address{Department of Mathematics, Harvard University, 1 Oxford Street, Cambridge, MA 02138}
\email{tcollins@math.harvard.edu}
\theoremstyle{plain}
\newtheorem{thm}{Theorem}[section]
\newtheorem{prop}[thm]{Proposition}
\newtheorem{defn}[thm]{Definition}
\newtheorem{lem}[thm]{Lemma}
\theoremstyle{definition}
\newtheorem{ex}[thm]{Example}
\numberwithin{equation}{section}
\newcommand{\Tr}{\textrm{Tr}}
\newcommand{\del}{\partial}
\newcommand{\dbar}{\overline{\del}}
\newcommand{\ddb}{i\del\dbar}
\renewcommand{\leq}{\leqslant}
\renewcommand{\geq}{\geqslant}
\renewcommand{\epsilon}{\varepsilon}
\renewcommand{\phi}{\varphi}
\begin{document}
\begin{abstract}
 We prove a priori interior $C^{2,\alpha}$ estimates for solutions of fully nonlinear elliptic equations of twisted type.  For example, our estimates apply to equations of the type convex + concave.  These results are particularly well suited to equations arising from elliptic regularization.  As application, we obtain
 a new proof of an estimate of Streets-Warren \cite{SW} on the twisted real Monge-Amp\`ere equation.
 \end{abstract}

\maketitle

\section{introduction}
In this paper we will derive an {\em a priori} interior $C^{2,\alpha}$ estimate for solutions of the nonlinear, uniformly elliptic equation
\begin{equation}\label{eq: intro eq}
F(D^{2}u) =0
\end{equation}
under the assumption that $F$ is of {\em twisted type} (see Defintion~\ref{defn: twisted type}, below).  Higher order estimates for fully nonlinear elliptic equations are a subtle matter, and depend heavily on the structure of $F$.  The fundamental result in this vein is the famous estimate of Evans \cite{E} and Krylov \cite{Kr1, Kr2}, which says that if $F$ is convex, or concave, then the $C^{\alpha}$ norm of $D^2u$ on the interior is controlled by $L^{\infty}$ norm of $D^2u$.  Nadirashvili-Vl\u adu\c t \cite{NV} have produced counterexamples to Evans-Krylov type estimates for general fully nonlinear equations.  As a result, it is an interesting and important problem to understand when Evans-Krylov type estimates hold for nonlinear equations which are neither convex nor concave.  

Let us briefly recount the positive results currently available in the literature.  Caffarelli-Cabr\'e \cite{CC1} proved an Evans-Krylov type estimate for functions $F$ which are the minimum of convex and concave functions-- in particular, their result applies to equations of Issac's type.  Caffarelli-Yuan \cite{CY} proved $C^{2,\alpha}$ a priori estimates under the assumption that level set  $\{F=0\}$ has uniformly convex intersection with a family of planes.  In essence, this allows one of the principle curvatures of $\{F=0\}$ to be negative.  Yuan \cite{Y} proved higher regularity for the 3 dimensional special Lagrangian equation.  More recently, motivated by the pluriclosed flow introduced by Streets-Tian \cite{St, ST1, ST2, ST3}, Streets-Warren \cite{SW} exploited the partial Legendre transform to prove an Evans-Krylov type theorem for the {\em real twisted Monge-Amp\`ere equation} 
\begin{equation}\label{eq: sw}
\log\det u_{x_{i}x_{j}} - \log\det(-u_{y_{k}y_{l}}) =0,
\end{equation}
where $u(x,y)$ is assumed to be uniformly convex in the $x$ variables, and uniformly concave in the $y$ variables.  The authors also consider parabolic and complex analogs of this equation.  

 The concern of this note is to extend the Evans-Krylov estimate to a general class of nonconvex equations of twisted type.  Let us give the precise setting for our result.  Let $S^{n\times n} \subset M^{n\times n}$ be the set of symmetric $n\times n$ matrices with real entries.  Suppose that $u$ is a smooth solution of equation~\eqref{eq: intro eq}, and let $\Sigma = \{ F=0 \} \subset S^{n\times n}$.
\begin{defn}\label{defn: twisted type}
We say that the uniformly elliptic equation~\eqref{eq: intro eq} is of {\em twisted type}, if it can be written as
\begin{equation}\label{eq: main eq}
F_{\cup}(D^{2}u) + F_{\cap}(D^2 u) = 0.
\end{equation}
where $F_{\cup}, F_{\cap}$ satisfy the following structural conditions;
\begin{enumerate} 
\item[(S1)] $F_{\cup}, F_{\cap}$ are continuous (possibly degenerate)  elliptic operators on $D^{2}u(B_{1})$.
\item[(S2)] There exists an open set $\mathcal{O}$ containing $\overline{D^{2}u(B_{1})}$ such that $F_{\cup}, F_{\cap}$ are $C^{2}$ on $\mathcal{O}$.
\item[(S3)] $F_{\cup}$ is convex and uniformly elliptic.
\item[(S4)] $F_{\cap}$ is weakly concave, in the sense of Definition~\ref{defn: wk concave} below.
\end{enumerate}
\end{defn}
\begin{defn}\label{defn: wk concave}
We say that $F_{\cap}$ is weakly concave if there exists a subset $U \subset \mathbb{R}$ and a function $G : U \rightarrow \mathbb{R}$ which is continuous, and smooth on the interior of $U$, such that 
\begin{enumerate}
\item[(i)] $\overline{F_{\cap}(\Sigma)} \subset U$,
\item[(ii)] $G' >0, G'' \leq 0$ and $G(F_{\cap}(\cdot))$ is concave,
\item[(iii)] There exists a continuous function $Q(x) : \mathbb{R} \rightarrow \mathbb{R}_{>0}$ such that $G'(x) \geq Q(x)$ for $x \in{\rm int} U$.
\end{enumerate}
\end{defn}

We have endeavored to make Definition~\ref{defn: twisted type} as broad as possible, which unfortunately necessitates its somewhat technical appearance.  In many cases, the conditions of Definition~\ref{defn: twisted type} are easily seen to be satisfied, as we will discuss in the examples below. Our main theorem is that equations of this form admit interior $C^{2,\alpha}$ a priori estimates.

\begin{thm}\label{thm: main}
Suppose $u$ is a smooth solution of equation~\eqref{eq: intro eq} on $B_{1} \subset \mathbb{R}^{n}$.  Assume that $F$ is of twisted type.  Assume moreover that $F_{\cap}(B_{1}) \subset {\rm int} U$.  Let  $0<\lambda < \Lambda < \infty$ be ellipticity constants for both $F, F_{\cup}$.  For every $0 < \alpha <1$ we have the estimate
\begin{equation*}
\|D^{2}u\|_{C^{\alpha}(B_{1/2})} \leq C\left(n,\lambda, \Lambda, \alpha, \gamma, \Gamma, F_{\cup}(0), \|D^{2}u\|_{L^{\infty}(B_{1})}\right),
\end{equation*}
where 
\begin{equation*}
\begin{aligned}
0<\gamma &:= \inf_{x\in F_{\cup}(D^{2}u)(B_{1})} Q'(-x) \\
\Gamma &:= {\rm Osc}_{B_1} G(-F_{\cup}(D^2u))
\end{aligned}
\end{equation*}
depend only on $ \Lambda, \|D^2u\|_{L^{\infty}(B_{1})}$ and the functions $G, Q$ of Definition~\ref{defn: wk concave}.
\end{thm}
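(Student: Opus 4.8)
The plan is to use the auxiliary function $G$ to split \eqref{eq: main eq} into a convex uniformly elliptic equation and a concave equation that share a common right-hand side of finite oscillation, to promote that right-hand side to $C^{\alpha}$, and then to quote the Evans--Krylov--Caffarelli--Cabr\'e estimate for convex equations.

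\emph{Reduction.} Since $G'>0$, equation \eqref{eq: main eq} is equivalent to $G(F_{\cap}(D^{2}u)) = G(-F_{\cup}(D^{2}u))$, so setting $w := G(F_{\cap}(D^{2}u)) = G(-F_{\cup}(D^{2}u))$ we have $\operatorname{Osc}_{B_{1}}w = \Gamma$. Because $F_{\cup}$ is convex and uniformly elliptic and $G$ is increasing and concave, $\mathcal B := -G\circ(-F_{\cup})$ is convex and uniformly elliptic near $\overline{D^{2}u(B_{1})}$: the hypothesis $F_{\cap}(B_{1})\subset\operatorname{int}U$ keeps $-F_{\cup}(D^{2}u(B_{1}))$ in a compact subset of $\operatorname{int}U$, on which $G',G''$ are bounded and $G'\ge\inf Q>0$, so the ellipticity and $C^{2}$ bounds of $\mathcal B$ are controlled by $n,\lambda,\Lambda,F_{\cup}(0),\|D^{2}u\|_{L^{\infty}(B_{1})},G,Q$. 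Dually, $\mathcal A := G\circ F_{\cap}$ is concave by condition (ii). Thus $u$ solves simultaneously $\mathcal B(D^{2}u) = -w$ (convex, uniformly elliptic) and $\mathcal A(D^{2}u) = w$ (concave, possibly degenerate), and on $\Sigma$ one has $\mathcal A^{pq} + \mathcal B^{pq} = G'(-F_{\cup})F^{pq}$, which is uniformly elliptic.

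\emph{H\"older regularity of $w$.} This is the heart of the matter, and I would run the Evans--Krylov / Caffarelli--Cabr\'e machinery adapted to this situation. Two ingredients are available at each pair of points: the supporting-hyperplane inequalities
\[ \mathcal A^{pq}(D^{2}u(y))(D^{2}u(x)-D^{2}u(y))_{pq} \ge w(x)-w(y), \qquad \mathcal B^{pq}(D^{2}u(y))(D^{2}u(x)-D^{2}u(y))_{pq} \le w(y)-w(x), \]
coming from concavity of $\mathcal A$ and convexity of $\mathcal B$ respectively; and the differentiated equation: writing $F_{\cup}(D^{2}u) = -G^{-1}(w)$ and differentiating twice along a unit direction $\xi$, convexity of $F_{\cup}$ together with convexity of $G^{-1}$ (which holds since $G$ is concave) gives $F_{\cup}^{pq}(D^{2}u)\,\partial_{pq}(u_{\xi\xi}) \le -(G^{-1})'(w)\,\partial_{\xi\xi}w$, i.e.\ $u_{\xi\xi}$ is a subsolution of the uniformly elliptic operator $F_{\cup}^{pq}(D^{2}u)\partial_{pq}$ up to a term involving $\partial_{\xi\xi}w$. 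The role of $Q$, specifically the positivity $Q'(-x)\ge\gamma>0$ on the range $F_{\cup}(D^{2}u)(B_{1})$, is to produce the coercive modification of $u_{\xi\xi}$ --- by a multiple of $w$, equivalently of $Q(F_{\cap}(D^{2}u))$ --- that turns these into a genuine subsolution inequality and absorbs the indefinite curvature contribution of $F_{\cap}''$, while $\Gamma=\operatorname{Osc}_{B_{1}}w$ controls the contribution of this modification to $\operatorname{Osc}D^{2}u$. Combining the Krylov--Safonov weak Harnack inequality for the modified second derivatives over a spanning set of directions $\xi$ with the pointwise constraint above yields an oscillation-decay estimate $\operatorname{Osc}_{B_{\theta r}(x_{0})}D^{2}u \le (1-\mu)\operatorname{Osc}_{B_{r}(x_{0})}D^{2}u + C\operatorname{Osc}_{B_{r}(x_{0})}w$, which --- since $w = \mathcal A(D^{2}u)$ --- iterates to give $D^{2}u$, hence $w$, in $C^{\alpha}(B_{3/4})$ for every $\alpha<1$, with the stated dependence of the constant.

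\emph{Conclusion, and the main obstacle.} Finally, $F_{\cap}(D^{2}u) = G^{-1}(w) \in C^{\alpha}(B_{3/4})$, hence by \eqref{eq: main eq} also $F_{\cup}(D^{2}u) = -F_{\cap}(D^{2}u) \in C^{\alpha}(B_{3/4})$; since $F_{\cup}$ is convex and uniformly elliptic with constants $\lambda,\Lambda$, the interior $C^{2,\alpha}$ estimate for convex equations with H\"older right-hand side gives $\|D^{2}u\|_{C^{\alpha}(B_{1/2})} \le C(n,\lambda,\Lambda,\alpha)(\|D^{2}u\|_{L^{\infty}(B_{3/4})} + \|F_{\cup}(D^{2}u)\|_{C^{\alpha}(B_{3/4})})$, whose right-hand side is controlled by the quantities in the statement. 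The main obstacle is precisely the penultimate step: unlike in the classical concave case, the right-hand side $w$ carries no a priori modulus of continuity, and $u_{\xi\xi}$ is neither a sub- nor a supersolution of the linearised operator on the nose. Identifying the correct auxiliary function and verifying that the bad $F_{\cap}''$ terms are dominated --- which is exactly what the structural hypotheses $G'\ge Q$, $Q'\ge\gamma>0$, and the finiteness of $\Gamma$ are designed to make possible --- is where the real work lies.
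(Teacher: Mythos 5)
Your reduction is sound as far as it goes, but the proposal has a genuine gap exactly where you say ``the real work lies'': the H\"older (or even oscillation-decay) estimate for $w=G(F_{\cap}(D^2u))$ is asserted, not proved, and the decay inequality you write down does not close. Since $w=\mathcal A(D^2u)$ is Lipschitz in $D^2u$ (in both directions, up to the ellipticity constants), the term $C\,\operatorname{Osc}_{B_r}w$ on the right of $\operatorname{Osc}_{B_{\theta r}}D^2u\le(1-\mu)\operatorname{Osc}_{B_r}D^2u+C\operatorname{Osc}_{B_r}w$ is itself comparable to $\operatorname{Osc}_{B_r}D^2u$ with a constant that is not small, so iterating gives nothing: the smallness of $\operatorname{Osc}w$ at small scales is precisely the conclusion you are trying to reach, and using it as an input is circular. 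Likewise, the differentiated-equation inequality $F_{\cup}^{pq}\partial_{pq}(u_{\xi\xi})\le -(G^{-1})'(w)\,\partial_{\xi\xi}w$ involves a second derivative of $w$ (a fourth derivative of $u$) with no a priori sign or bound, so it does not make $u_{\xi\xi}$ a usable subsolution. Your closing paragraph candidly concedes that the auxiliary function and the absorption of the $F_{\cap}''$ terms have not been identified; that concession is accurate, and it means the argument is a program rather than a proof.

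For comparison, the paper closes this gap by a different mechanism that avoids proving oscillation decay of $D^2u$ directly. The key lemma is that $G(F_{\cap}(D^2u))$ is a \emph{supersolution} of the linearized operator $L=(F_{\cup}^{ij}+F_{\cap}^{ij})\partial_{ij}$ --- a short computation using the concavity of $G\circ F_{\cap}$, $G''\le 0$, and the convexity of $F_{\cup}$. One then runs a dichotomy over dyadic scales: either the set where $F_{\cup}(D^2u)$ drops by $\xi$ below its supremum has small measure fraction, in which case one compares $u$ with the solution $\upsilon$ of the genuinely convex uniformly elliptic problem $F_{\cup}(D^2\upsilon)=t_\ell$ (ABP gives $\|\upsilon-w_\ell\|_{L^\infty}\le C(\xi^n+\delta)^{1/n}$ and classical Evans--Krylov applied to $\upsilon$ produces the quadratic approximant); or the bad set has definite measure at every scale, in which case the weak Harnack inequality applied to the supersolution forces $\inf G(F_{\cap}(D^2u))$ to increase by a fixed amount $\theta$ per scale, which can happen at most $\Gamma/\theta$ times. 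This yields a quadratic polynomial approximation at a definite scale $\eta$, and Savin's small perturbation theorem (or the Caffarelli--Yuan iteration) then gives the $C^{2,\alpha}$ bound. Note that only measure-theoretic, not pointwise or modulus-of-continuity, information about $w$ is ever needed --- which is what makes the argument noncircular. If you want to salvage your route, you would need to first establish the supersolution property and the measure dichotomy; at that point you would essentially be reproducing the paper's proof.
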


Twisted type equations arise naturally in problems in differential geometry; let us describe some examples.

\begin{ex}
Suppose $F_{\cup}$ is a smooth convex, uniformly elliptic operator, and $F_{\cap}$ is a smooth concave degenerate elliptic operator.  Then the operator $F=  F_{\cup} + F_{\cap}$ is of twisted type, with $G(x) = x, Q(x) =1$.  In particular, $U= \mathbb{R}$ in Definition~\ref{defn: wk concave}, and hence the condition $F_{\cap}(B_{1}) \subset {\rm int} U$ is vacuous. 
\end{ex}

Let us consider a slightly less trivial, and more explicit family of examples.

\begin{ex}\label{ex: twisted sigma}
For a symmetric matrix $M$, we let $\sigma_{k}(M)$ denote the $k$-th symmetric polynomial of the eigenvalues of $M$.  It is well-known that $\sigma_{k}(D^{2}u)$ is an elliptic operator on the cone of $k$-convex functions.  Furthermore, if we take $G(x) = x^{1/k}$ for $x\geq 0$, then $\sigma_{k}(D^{2}u)^{1/k}$ is concave, and $G$ satisfies properties $(i)$ and $(ii)$ of Definition~\ref{defn: wk concave}.  For item $(iii)$ we can take
\begin{equation*}
Q(x) = \min\{1, k|x|^{\frac{1}{k}-1}\}
\end{equation*}
In particular, if we assume $\|D^{2}u\|_{L^{\infty}}(B_{1}) \leq \Lambda$, then the equations
\begin{equation}\label{eq: twist k}
\Delta u + \sigma_{k}(D^{2}u) = 1
\end{equation}
are uniformly elliptic for $u\in \Gamma_{k}(B_{1})$, and are all of twisted type.  The estimate in Theorem~\ref{thm: main} applies provided $\sigma_{k}(x) >0$ for all $x \in B_{1}$.  These operators do not have convex sublevel sets, and hence the Evans-Krylov theorem does not apply.  Moreover, the result of Caffarelli-Yuan \cite{CY} does not apply as the convexity of the level sets $\{\sigma_{k} =t \}$ degenerate as $t \rightarrow 0$.   Finally, let us remark that if $k=n$, then the assumption that $\sigma_n >0$ is superfluous thanks to the constant rank theorem of Bian-Guan \cite{BG}.
\end{ex}

Equations of the form considered in Example~\ref{ex: twisted sigma} arise naturally, both in geometry and analysis.  For example, the $k=n$ case of~\eqref{eq: twist k}, arose in the work of the author and G. Sz\'ekelyhidi on the $J$-flow on toric K\"ahler manifolds \cite{CS}.  Previously, Fu-Yau \cite{FY} solved a twisted Monge-Amp\`ere type equation similar to (but more complicated than)~\eqref{eq: twist k} in their study of the Strominger system.  Fu-Yau succeeded in bypassing $C^{2,\alpha}$ estimates by estimating directly the $C^{3}$ norm by a difficult maximum principle argument.
 
 One might argue that instead of equation~\eqref{eq: twist k}, one should instead study the concave equation
 \begin{equation}\label{eq: alt eq}
 \Delta u + \sigma(D^{2}u)^{1/k} =1.
 \end{equation}
 Let us provide a brief rebuttal to this point.  First,  note that~\eqref{eq: alt eq} is not uniformly elliptic without a strict lower bound $\sigma_{k}>\lambda >0$, while equation~\eqref{eq: twist k} is uniformly elliptic without any further assumptions.  For this reason the family of equations~\eqref{eq: twist k} arises as a natural ``elliptic regularization" of the $k$-Hessian equation.  Furthermore, on a K\"ahler manifold $X$ of dimension $n$ it is natural to consider equations of the form 
 \begin{equation*} 
 \omega^{n-1} \wedge (\alpha + \ddb\phi )+ \omega^{n-k}\wedge (\alpha + \ddb \phi)^{k} = c \omega^{n}
 \end{equation*}
where $\omega$ is a fixed K\"ahler form, $\alpha$ is a real, smooth, closed $(1,1)$-form, and $\phi$ is unknown.  In this case, the constant $c$ is determined cohomologically by the classes $[\omega], [\alpha] \in H^{1,1}(X,\mathbb{R})$.  In the toric case, this equation reduces to equation~\eqref{eq: twist k}.  On the other hand, for the concave equation whose local expression is~\eqref{eq: alt eq},  the right hand side is not determined cohomologically and one needs additional information, often unavailable in geometric problems, to specify the equation completely.

Our last example illustrates the necessity of the technicalities in Definition~\ref{defn: twisted type}.
 \begin{ex}\label{ex: sw}
Let $\{x_{1}, \dots, x_{k}\}$ be standard coordinates on $\mathbb{R}^{k}$, and let $\{y_{1}, \dots, y_{\ell}\}$ be standard coordinates on $\mathbb{R}^{\ell}$, so that we may identify $\mathbb{R}^{k+\ell} = \mathbb{R}^{k} \times \mathbb{R}^{\ell}$.  Set $n= k+\ell$.  Our identification induces an decomposition of any matrix $M \in S^{n\times n}$ as
\[
M = M_{kk} \oplus M_{\ell k} \oplus M_{\ell k}^{T} \oplus M_{\ell\ell}
\]
where $M_{kk}$ is the symmetric $k\times k$ matrix corresponding to the $\mathbb{R}^{k}$ factor in the obvious way, and similarly for $M_{\ell \ell}$.  Define maps by 
\[
\pi_{k}(M) = M_{kk}, \qquad \pi_{\ell}(M) = M_{\ell}.
\] For $\kappa, \lambda >0$, we set
\begin{equation}
\mathcal{E}_{\lambda, \kappa} := \left\{ M \in S^{n\times n} :
\begin{aligned} 
&(i)\, \lambda I_{k} \leq \pi_{k}(M) \leq \lambda^{-1} I_{k}, \quad \\
&(ii)\,  \lambda I_{\ell} \leq -\pi_{\ell}(M) \leq \lambda^{-1}I_{\ell},\\
&(iii)\, \|M\| \leq \kappa
\end{aligned} \right\}
\end{equation}
where $I_{j}$ denotes the $j\times j$ identity matrix for $j=k,\ell$.  Note the $\mathcal{E}_{\lambda, \kappa}$ is a compact, convex subset of $S^{n\times n}$.

Consider the real twisted Monge-Amp\`ere equation of Streets-Warren \cite{SW}, given in equation~\eqref{eq: sw}.  Let $u$ be a smooth solution of~\eqref{eq: sw} on $B_{1}$ and suppose that $\overline{D^{2}u(B_{1})} \subset \mathcal{E}_{\lambda, \kappa}$.  Write equation~\eqref{eq: sw} in the form
 \begin{equation*}
  -\left(\det (-D^{2}_{y}u)\right)^{1/n} + \epsilon \Tr \left(D^{2}_{x}u\right) + \left( \det (D^{2}_{x}u)\right)^{1/n} -  \epsilon \Tr \left(D^{2}_{x}u\right) =0.
 \end{equation*}
 We note that under the above assumptions
 \begin{equation*}
 F_{\cup}(M) :=    -\left(\det (-\pi_{\ell}M)\right)^{1/n} + \epsilon \Tr \left(\pi_{k}M\right)
 \end{equation*}
 is smooth and uniformly elliptic on $\mathcal{E}_{\lambda, \kappa}$ for $\epsilon >0$ and convex.  Furthermore 
 \begin{equation*}
 F_{\cap} (D^{2}u) := \left( \det (D^{2}_{x}u)\right)^{1/n} -  \epsilon \Tr \left(D^{2}_{x}u\right)
 \end{equation*}
 is degenerate elliptic for $\epsilon>0$ depending only on $k,\ell, \lambda$, and is concave.  However, we note that $F_{\cup}$ is {\em not} uniformly elliptic outside of $\mathcal{E}_{\lambda, \kappa}$.  To remedy this we use an envelope trick exploited by Wang \cite{W}, and Tosatti-Wang-Weinkove-Yang \cite{TWWY} to extend $F_{\cup}$ to a uniformly elliptic convex operator outside of $\mathcal{E}_{\lambda, \kappa}$.  To be precise, we set
 \begin{equation}
 \widetilde{F_{\cup}}(M) := \sup_{N, c}\left \{{\rm Tr} \left(-\pi_{\ell}(N) \pi_{\ell}(M) \right) + {\rm Tr}(\pi_{k}(M)) +c \right\}
 \end{equation}
 where the supremum is take over all $N\in \mathcal{E}_{\lambda, \kappa}, c \in \mathbb{R}$ such that
 \begin{equation}
 {\rm Tr} \left(-\pi_{\ell}(N) \pi_{\ell}(M) \right) +c + {\rm Tr}(\pi_{k}(M)) \leq F_{\cup}(X)\qquad \forall X \in \mathcal{E}_{\lambda, \kappa}
\end{equation}

The operator $ \widetilde{F_{\cup}}(\cdot)$ is precisely the convex envelope of the graph of $F_{\cup}(\cdot)$ over $\mathcal{E}_{\lambda, \kappa}$.  In particular, we immediately obtain that $\widetilde{F_{\cup}}(\cdot)$ is continuous, convex and agrees with $F_{\cup}(\cdot)$ on the convex set $\mathcal{E}_{\lambda, \kappa}$.  It suffices to check that $\widetilde{F_{\cup}}(\cdot)$ is uniformly elliptic.  We include the short argument for the readers' convenience (see also \cite{W}).  Fix $P \in S^{n\times n}$, with $P \geq 0$.   By compactness there exist $N_{1}, N_{2} \in \mathcal{E}_{\lambda, \kappa}$ and $c_{1}, c_{2} \in \mathbb{R}$ such that
\begin{equation}\label{eq: tilF eq}
\begin{aligned}
\widetilde{F_{\cup}}(M+P) &= {\rm Tr} \left(-\pi_{\ell}(N_{1}) \pi_{\ell}(M+P) \right) + {\rm Tr}(\pi_{k}(M+P)) +c_{1}\\
\widetilde{F_{\cup}}(M) &= {\rm Tr} \left(-\pi_{\ell}(N_{2}) \pi_{\ell}(M) \right) + {\rm Tr}(\pi_{k}(M)) +c_{2}
\end{aligned}
\end{equation}
By maximality we must have
\begin{equation}\label{eq: tilF bd}
\begin{aligned}
\widetilde{F_{\cup}}(M+P) &\geq  {\rm Tr} \left(-\pi_{\ell}(N_{2}) \pi_{\ell}(M+P) \right) + {\rm Tr}(\pi_{k}(M+P)) +c_{2}\\
\widetilde{F_{\cup}}(M) &\geq  {\rm Tr} \left(-\pi_{\ell}(N_{1}) \pi_{\ell}(M) \right) + {\rm Tr}(\pi_{k}(M)) +c_{1}
\end{aligned}
\end{equation}
Combining the first line of~\eqref{eq: tilF bd} with the second line of~\eqref{eq: tilF eq} we get
\begin{equation*}
\widetilde{F_{\cup}}(M+P)-\widetilde{F_{\cup}}(M) \geq  {\rm Tr}\left(-\pi_{\ell}(N_{2}) \pi_{\ell}(P) \right) + {\rm Tr}(\pi_{k}(P)) \geq \lambda \|P\|.
\end{equation*}
Similarly, combining the second line of ~\eqref{eq: tilF bd} with the first line of~\eqref{eq: tilF eq} we get
\begin{equation*}
\widetilde{F_{\cup}}(M+P)-\widetilde{F_{\cup}}(M) \leq {\rm Tr}\left(-\pi_{\ell}(N_{1}) \pi_{\ell}(P) \right) + {\rm Tr}(\pi_{k}(P)) \leq \lambda^{-1} \|P\|.
\end{equation*}
 In particular, the real twisted Monge-Amp\`ere equation~\eqref{eq: sw} is of twisted type in the sense of Definition~\ref{defn: twisted type}, and so Theorem~\ref{thm: main} gives a new proof of an estimate of Streets-Warren \cite[Theorem 1.1]{SW}. 
  \end{ex}

 The proof of Theorem~\ref{thm: main} hinges on the observation that $G(F_{\cap}(D^{2}u))$ is a supersolution of the linear equation $L\phi=0$, where $L$ denotes the linearization of~\eqref{eq: intro eq}; see Lemma~\ref{lem: key lem} below.  With this observation we can employ the machinery used in the proof of the Evans-Krylov theorem with some modifications to account for the fact that we must work with general elliptic operators rather than the Laplacian.  We take this up in detail in section 2.  In section 3 we provide a brief description of some applications and extensions.

\section{Proof of Theorem~\ref{thm: main}}
For ease of notation, whenever $F_{\cup}, F_{\cap}$ are $C^{2}$, we will set
\begin{equation}
F_{\alpha}^{ij} := \frac{\del F_{\alpha}}{\del a_{ij}}, \quad F_{\alpha}^{ij,rs} := \frac{\del^{2} F_{\alpha}}{\del a_{ij} \del a_{rs}}
\end{equation}
for $\alpha = \cup, \cap$ and $1 \leq i,j,r,s \leq n$.
We begin with some simple calculations.  First, if $u$ solves equation~\eqref{eq: main eq}, then by structural condition (S2) we have
\begin{eqnarray}
Lu_{a} := \left(F_{\cup}^{ij} + F_{\cap}^{ij}\right)u_{aij} &=& 0\\
\left(F_{\cup}^{ij}+ F_{\cap}^{ij}\right)u_{abij} + \left(F_{\cup}^{ij,rs} + F_{\cap}^{ij,rs}\right)u_{aij}u_{brs}&=&0
\end{eqnarray}
The main observation is contained in the following lemma.

\begin{lem}\label{lem: key lem}
Under the assumptions of Theorem~\ref{thm: main}, $G(F_{\cap}(D^2u))$ is a supersolution of the linearized equation on $B_{1}$.
\begin{equation*} 
L[G(F_{\cap}(D^2u))] \leq 0.
\end{equation*}
\end{lem}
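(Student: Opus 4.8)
The plan is to exploit both halves of the structure at once by writing $v := G(F_{\cap}(D^{2}u))$ in two different ways. Since $u$ solves \eqref{eq: main eq} we have $F_{\cap}(D^{2}u)\equiv -F_{\cup}(D^{2}u)$ on $B_{1}$, and therefore
\begin{equation*}
v \;=\; G\big(F_{\cap}(D^{2}u)\big) \;=\; G\big(-F_{\cup}(D^{2}u)\big).
\end{equation*}
Accordingly I set $H(M):=G(F_{\cap}(M))$ and $\Psi(M):=G(-F_{\cup}(M))$, and (following the notation for $F_{\cup},F_{\cap}$) write $H^{ij}:=\partial H/\partial a_{ij}$, $H^{ij,rs}:=\partial^{2}H/\partial a_{ij}\partial a_{rs}$, and similarly for $\Psi$. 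By (S2) together with the standing hypothesis $F_{\cap}(B_{1})\subset{\rm int}\,U$, both $H$ and $\Psi$ are $C^{2}$ on a neighborhood of $\overline{D^{2}u(B_{1})}$, and --- this is the crucial structural input --- both are concave: $H$ by (S4)(ii) directly, and $\Psi$ because $-F_{\cup}$ is concave by (S3) while $G$ is concave and nondecreasing by (S4)(ii) (an increasing concave function of a concave function is concave). Writing $g':=G'(F_{\cap}(D^{2}u))$, which is strictly positive by (S4)(iii), the chain rule gives at $D^{2}u$ the identities $H^{ij}=g'F_{\cap}^{ij}\ge 0$ and $\Psi^{ij}=-G'(-F_{\cup})F_{\cup}^{ij}=-g'F_{\cup}^{ij}<0$ (using $-F_{\cup}=F_{\cap}$ there), so that the coefficients of the linearization obey the algebraic identity
\begin{equation*}
g'\,\big(F_{\cup}^{ij}+F_{\cap}^{ij}\big)\;=\;H^{ij}-\Psi^{ij}.
\end{equation*}

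Next I would differentiate $v$ twice. Applying the chain rule to $v=H(D^{2}u)$ and to $v=\Psi(D^{2}u)$ produces two expressions for $v_{ij}$, one through the derivatives of $H$ at $D^{2}u$ and one through those of $\Psi$. Now write $Lv=(F_{\cup}^{ij}+F_{\cap}^{ij})v_{ij}=\tfrac{1}{g'}(H^{ij}-\Psi^{ij})v_{ij}$ and substitute the $\Psi$-expression of $v_{ij}$ into the $H^{ij}$ term and the $H$-expression of $v_{ij}$ into the $\Psi^{ij}$ term. The purely fourth-order-in-$u$ contributions then combine into an expression of the shape $(H^{ij}\Psi^{ab}-\Psi^{ij}H^{ab})\,u_{ijab}$, which vanishes because $u_{ijab}$ is totally symmetric (relabel the index pair $(i,j)$ with $(a,b)$). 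What survives is
\begin{equation*}
g'\,Lv \;=\; H^{ij}\,\Psi^{ab,rs}u_{abi}u_{rsj}\;-\;\Psi^{ij}\,H^{ab,rs}u_{abi}u_{rsj}.
\end{equation*}
For the first term I would diagonalize the positive semidefinite matrix $H^{ij}=\sum_{m}h^{m}_{i}h^{m}_{j}$, rewriting it as $\sum_{m}\Psi^{ab,rs}S^{m}_{ab}S^{m}_{rs}$ with $S^{m}_{ab}:=h^{m}_{i}u_{abi}$ symmetric in $(a,b)$; each summand is $\le 0$ by concavity of $\Psi$. For the second I would diagonalize $-\Psi^{ij}=g'F_{\cup}^{ij}=\sum_{m}p^{m}_{i}p^{m}_{j}$, rewriting it as $\sum_{m}H^{ab,rs}R^{m}_{ab}R^{m}_{rs}$ with $R^{m}_{ab}:=p^{m}_{i}u_{abi}$; each summand is $\le 0$ by concavity of $H$. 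Hence $g'\,Lv\le 0$, and since $g'>0$ the lemma follows.

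I expect the cancellation of the fourth-order terms to be the main point, and it is exactly what forces the two-representation device. If one instead computes $Lv$ directly from the single representation $v=H(D^{2}u)$ and uses the twice-differentiated equation to rewrite $(F_{\cup}^{ij}+F_{\cap}^{ij})u_{abij}$, the fourth-order contribution comes out as $-H^{ab}(F_{\cup}^{ij,rs}+F_{\cap}^{ij,rs})u_{ija}u_{rsb}$; after diagonalizing $H^{ab}$, its $F_{\cup}$-part is nonpositive by convexity (S3), but its $F_{\cap}$-part has no definite sign, since only $G\circ F_{\cap}$, and not $F_{\cap}$ itself, is concave. Rewriting the linearization through $g'(F_{\cup}^{ij}+F_{\cap}^{ij})=H^{ij}-\Psi^{ij}$ and pitting the two representations of $v$ against one another is precisely what converts this indefinite term into the manifestly vanishing one above, leaving only the two concavity-controlled quadratic forms. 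The remaining items are routine: checking that $H$ and $\Psi$ are $C^{2}$ on a neighborhood of $\overline{D^{2}u(B_{1})}$ (from $F_{\cap}(B_{1})\subset{\rm int}\,U$, compactness, and (S2)), that $F_{\cup}^{ij}+F_{\cap}^{ij}$ is positive semidefinite so the diagonalizations are legitimate, and the index bookkeeping in the chain-rule expansions.
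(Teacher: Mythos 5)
Your argument is correct, but it is genuinely different from the paper's. The paper works with the single representation $G\circ F_{\cap}$: it computes $\partial_a\partial_b\,G(F_{\cap}(D^2u))$ by the chain rule, uses the twice-differentiated equation to replace $G'F_{\cap}^{ab}L(u_{ab})$ by $-G'F_{\cap}^{ab}(F_{\cup}^{ij,rs}+F_{\cap}^{ij,rs})u_{aij}u_{brs}$, and then observes that the indefinite piece $-G'F_{\cap}^{ab}F_{\cap}^{ij,rs}u_{aij}u_{brs}$ cancels exactly against the matching piece of $(F_{\cup}^{ab}+F_{\cap}^{ab})(G'F_{\cap}^{ij,rs}+G''F_{\cap}^{ij}F_{\cap}^{rs})u_{aij}u_{brs}$; what remains is a sum of three terms, each nonpositive by concavity of $G\circ F_{\cap}$ against $F_{\cup}^{ab}\ge 0$, by $G''\le 0$ against $F_{\cap}^{ab}\ge 0$, and by convexity of $F_{\cup}$ against $G'F_{\cap}^{ab}\ge 0$. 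So your closing remark that the single-representation computation is stuck on an indefinite term is not quite accurate: that term cancels identically. Your route instead uses the zeroth-order equation to write $v$ simultaneously as the two concave functions $H=G\circ F_{\cap}$ and $\Psi=G\circ(-F_{\cup})$ of $D^2u$, notes $g'(F_{\cup}^{ij}+F_{\cap}^{ij})=H^{ij}-\Psi^{ij}$, cross-substitutes, and kills the fourth-order terms by the antisymmetry of $H^{ij}\Psi^{ab}-\Psi^{ij}H^{ab}$ under the swap $(i,j)\leftrightarrow(a,b)$ rather than by the differentiated equation; the residual quadratic forms are then handled exactly as in Evans--Krylov. The two final expressions are not term-by-term identical --- they differ by $G''F_{\cap}^{ij}\bigl[(F_{\cap}^{ab}u_{iab})(F_{\cap}^{rs}u_{jrs})-(F_{\cup}^{ab}u_{iab})(F_{\cup}^{rs}u_{jrs})\bigr]$, which vanishes because the once-differentiated equation gives $F_{\cap}^{ab}u_{abi}=-F_{\cup}^{ab}u_{abi}$ --- so the two computations are consistent. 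What your version buys is a more conceptual explanation of the mechanism (a function that is concave in $D^2u$ in two ways whose gradient difference is a positive multiple of the linearization is automatically a supersolution), at the modest extra cost of checking that $\Psi$ is $C^2$ and concave near $D^2u(B_1)$, which you do correctly using $F_{\cap}(D^2u(B_1))\subset{\rm int}\,U$ and $G$ concave increasing.
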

\begin{proof}
By structural condition (S2), $F_{\cup}, F_{\cap}$ are $C^{2}$ in an open neighborhood of $D^{2}u(B_{1})$.  Since $F_{\cap}(D^{2}u(B_{1})) \subset {\rm int}U$, the function $G(F_{\cap}(D^2u))$ is $C^2$ in $B_{1}$.  Hence, we can compute
\begin{equation*}
\begin{aligned}
\del_{a} G(F_{\cap}(D^2u)) &= G' F_{\cap}^{ij} u_{aij}\\
\del_{b}\del_{a} G(F^{\cap}(D^2u)) &= G' F_{\cap}^{ij} u_{abij} + \left(G' F_{\cap}^{ij,rs}  + G'' F_{\cap}^{ij}F_{\cap}^{rs}\right) u_{aij} u_{brs}.
\end{aligned}
\end{equation*}
As a result, we obtain
\begin{equation*}
\begin{aligned}
L G(F_{\cap}(D^2u)) =& G' F_{\cap}^{ab} L(u_{ab}) +  \left(G' F_{\cap}^{ij,rs}  + G'' F_{\cap}^{ij}F_{\cap}^{rs}\right)\left(F_{\cup}^{ab} + F_{\cap}^{ab}\right) u_{aij} u_{brs}\\
=& \left[-G'F_{\cap}^{ab}\left(F_{\cup}^{ij,rs} + F_{\cap}^{ij,rs}\right)\right]u_{aij} u_{brs}\\
& + \left[\left(G' F_{\cap}^{ij,rs}  + G'' F_{\cap}^{ij}F_{\cap}^{rs}\right)\left(F_{\cup}^{ab} + F_{\cap}^{ab}\right)\right]  u_{aij} u_{brs} \\
=&F_{\cup}^{ab}\left(G' F_{\cap}^{ij,rs}  + G'' F_{\cap}^{ij}F_{\cap}^{rs}\right)u_{aij} u_{brs} + G''F_{\cap}^{ab}F_{\cap}^{ij}F_{\cap}^{rs} u_{aij}u_{brs}\\
& -G'F_{\cup}^{ij,rs}u_{aij} u_{brs}
\end{aligned}
\end{equation*}
Now, since $G(F_{\cap})$ is concave, and $F_{\cup}$ is elliptic, the first term is non-positive.  Moreover, since $G'' \leq 0$, and $F_{\cap}$ is (degenerate) elliptic, the second term is also non-positive.  Finally, since $G'>0$, and $F_{\cup}$ is convex, the third term is non-positive as well.  As a result, we have
\begin{equation*}
LG(F_{\cap}(D^2u)) \leq 0
\end{equation*}
as was to be proved.
\end{proof}

The following proposition is essential, and is based on the proof of the Evans-Krylov theorem (cf. \cite[Proposition 1]{CY}, \cite{CC}).

\begin{prop}\label{prop: main EK}
Let the assumptions of Theorem~\ref{thm: main} be in force.  Then, for any $\epsilon >0$, there exists a positive constant $\eta = \eta(n,\lambda, \Lambda, \epsilon, \gamma, \Gamma, \|D^{2}u\|_{L^{\infty}(B_{1})})$ and a quadratic polynomial $P$ so that, for all $x \in B_{1}$, we have 
\begin{equation*}
\begin{aligned}
\left| \frac{1}{\eta^{2}} u(\eta x) - P(x)\right| &\leq \epsilon \\
F(D^{2}P) &=0.
\end{aligned}
\end{equation*}
\end{prop}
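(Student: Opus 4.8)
The plan is to run the classical Evans--Krylov oscillation-decay argument, but with the Laplacian replaced by the linearized operator $L$ and with Lemma~\ref{lem: key lem} supplying the key supersolution. First I would set up the standard machinery: since $u$ is a solution of~\eqref{eq: main eq} and $F$ is $C^2$ near $\overline{D^2u(B_1)}$, every pure second derivative $u_{ee}$ (for $e$ a unit vector) satisfies $Lu_{ee} \le 0$ by differentiating the equation twice and using ellipticity of $F_{\cup}+F_{\cap}$ together with convexity of $F_{\cup}$ and weak concavity of $F_{\cap}$ — more precisely, $\bigl(F^{ij}_\cup+F^{ij}_\cap\bigr)u_{eeij} = -\bigl(F^{ij,rs}_\cup+F^{ij,rs}_\cap\bigr)u_{eij}u_{ers}$, and the right side has the wrong sign only through the $F^{ij,rs}_\cap$ part, which is exactly where Lemma~\ref{lem: key lem} is used to repair matters. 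Concretely, I would work with the functions $w_e := G'(c)\,u_{ee} - G(F_\cap(D^2u))$ for a suitable constant, or more cleanly follow \cite{CY} and consider, for a finite set of directions $e$ spanning a neighborhood of the graph of $F$ over $\Sigma$, the quantities obtained by combining $u_{ee}$ with $G(F_\cap(D^2u))$; by Lemma~\ref{lem: key lem} and the second-derivative computation, these are $L$-supersolutions.

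The heart of the argument is the oscillation estimate. Writing $\Theta(\rho) := \sum_e \operatorname{osc}_{B_\rho} u_{ee}$ (sum over the finite direction set, or an appropriate $M^*(D^2u)$ quantity), I would prove a decay inequality $\Theta(\rho/2) \le (1-\delta)\Theta(\rho) + C\rho$ for some $\delta = \delta(n,\lambda,\Lambda,\gamma,\Gamma,\dots) \in (0,1)$. This comes from applying the weak Harnack inequality (Krylov--Safonov) to each nonnegative supersolution $\sup_{B_\rho}u_{ee} - u_{ee} + (\text{correction involving } G(F_\cap))$ of the uniformly elliptic operator $L$ (with ellipticity constants $\lambda,\Lambda$ for $F=F_\cup+F_\cap$), summing, and crucially using the \emph{concavity of the level set $\{F=0\}$ after the transformation $G$} to convert pointwise control of the $u_{ee}$'s at a single point into control of the full oscillation — this is the step where $\Gamma = \operatorname{Osc}_{B_1}G(-F_\cup(D^2u))$ and $\gamma = \inf Q'(-x)$ enter, since $G$-concavity gives a quantitative convexity modulus $\gamma$ for the transformed level surface and $\Gamma$ bounds the relevant geometric quantities. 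Iterating the decay inequality yields $\operatorname{osc}_{B_\rho}D^2u \le C\rho^{\beta}$ for some $\beta>0$ depending on the stated constants, i.e. an a priori interior $C^{2,\beta}$ bound at the origin (and, by translation, at every interior point).

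Finally I would convert the Hölder decay of $D^2u$ into the statement as phrased. Given $\epsilon>0$, the $C^{2,\beta}$ estimate says there is a quadratic polynomial $P_0(x) = u(0) + Du(0)\cdot x + \tfrac12 x^T D^2u(0)\, x$ with $|u(x) - P_0(x)| \le C|x|^{2+\beta}$ on $B_{1/2}$. Rescaling, $\eta^{-2}u(\eta x) - \eta^{-2}P_0(\eta x) $ has sup norm $\le C\eta^{\beta}$ on $B_1$, and $\eta^{-2}P_0(\eta x)$ is a quadratic polynomial whose Hessian is $D^2u(0)$; hence $F$ of its Hessian is $F(D^2u(0)) = 0$. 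Choosing $\eta$ with $C\eta^\beta \le \epsilon$ and letting $P(x) := \eta^{-2}P_0(\eta x)$ gives exactly the claimed pair of inequalities, with $\eta$ depending on the advertised constants through $C$ and $\beta$.

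The main obstacle I anticipate is making the oscillation-decay step fully rigorous with a \emph{general} uniformly elliptic $L$ rather than the Laplacian: in the classical Evans--Krylov proof one uses the specific structure of $\Delta$ and the concavity of $F$ directly, whereas here one must (a) carefully track that $L = F^{ij}(D^2u)\partial_i\partial_j$ has bounded measurable coefficients with the right ellipticity so Krylov--Safonov applies, and (b) correctly combine the supersolution property of $G(F_\cap(D^2u))$ from Lemma~\ref{lem: key lem} with the supersolution property of the $u_{ee}$'s so that the $G$-transformed concavity of the level set can be exploited uniformly — this is precisely why the constants $\gamma$ and $\Gamma$, rather than just $\lambda,\Lambda$, appear in the final bound, and getting their dependence right is the delicate bookkeeping of the proof.
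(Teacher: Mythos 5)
Your overall strategy is genuinely different from the paper's, and unfortunately the step on which it rests has a gap that I do not believe can be repaired along the lines you sketch. The classical Evans--Krylov oscillation-decay needs each pure second derivative $u_{ee}$ to be an $L$-sub- or supersolution, and here it is neither: $Lu_{ee} = -\bigl(F^{ij,rs}_{\cup}+F^{ij,rs}_{\cap}\bigr)u_{eij}u_{ers}$, and weak concavity only controls the sign of $G'F^{ij,rs}_{\cap}+G''F^{ij}_{\cap}F^{rs}_{\cap}$, leaving an uncontrolled term $-G''\bigl(F^{ij}_{\cap}u_{eij}\bigr)^2$ of the wrong sign when you try to bound $-F^{ij,rs}_{\cap}u_{eij}u_{ers}$. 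Your proposed repair $w_e = G'(c)\,u_{ee}-G(F_{\cap}(D^2u))$ does not work: since $L[G(F_{\cap}(D^2u))]\le 0$ by Lemma~\ref{lem: key lem}, subtracting it only \emph{adds} a nonnegative quantity to $Lw_e$; and even with the opposite sign, $L[G(F_{\cap}(D^2u))]$ is a full contraction against $F^{ab}$ over all directions, so it cannot cancel the bad term in the single direction $e$. The subsequent step --- ``using the concavity of the level set $\{F=0\}$ after the transformation $G$'' to convert pointwise information into oscillation control --- also has no basis here: $G$ is applied only to $F_{\cap}$, not to $F$, and for twisted equations the level set $\{F=0\}$ is genuinely nonconvex (the paper stresses this for $\Delta u+\sigma_k(D^2u)=1$); this failure is precisely why a direct $C^{2,\beta}$ decay is unavailable and why the proposition is stated as a one-scale polynomial approximation to be fed into Savin's theorem.

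The paper avoids directional second derivatives entirely and instead runs a measure dichotomy on the scalar $F_{\cup}(D^2u)=-F_{\cap}(D^2u)$, in the spirit of Caffarelli--Yuan. Setting $t_k=\sup_{B_{1/2^k}}F_{\cup}(D^2u)$ and $E_k=\{F_{\cup}(D^2u)\le t_k-\xi\}$: if $|E_\ell|$ is a small fraction of $|B_{1/2^\ell}|$ at some scale, then $u$ nearly solves the \emph{convex} equation $F_{\cup}(D^2v)=t_\ell$ there; one compares with the true solution via ABP and applies the classical Evans--Krylov theorem to $v$ (not to $u$) to produce the quadratic polynomial, adjusting it at the end so that $F(D^2P)=0$ via the lemma of Caffarelli--Yuan. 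If instead $|E_k|\ge\delta|B_{1/2^k}|$ at every scale, the weak Harnack inequality applied to the single supersolution $G(F_{\cap}(D^2u))-s_k\ge 0$ of Lemma~\ref{lem: key lem} forces $\inf G(F_{\cap}(D^2u))$ to increase by a fixed $\theta=c\gamma\xi\delta^{1/p_0}$ at each step, which can happen at most $\Gamma/\theta$ times. This is where $\gamma$ and $\Gamma$ actually enter --- as the Harnack gain and the total oscillation budget --- not as a convexity modulus of the level surface. Your final rescaling step (Taylor polynomial plus $C\rho^{2+\beta}$ decay implies the statement) would be fine if the decay were established, but the decay is the whole difficulty.
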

\begin{proof}
Fix constants $\rho, \xi, \delta, k_{0} >0$ to be determined.  We let $C$ denote a constant depending on the stated data which may change from line to line.  Define 
\begin{equation*}
\begin{aligned}
t_{k} &:= \sup_{B_{1/2^{k}}} F_{\cup}(D^2u), \qquad 1 \leq k \leq k_{0}\\
s_{k} &:= \inf_{B_{1/2^{k}}} G(F_{\cap}(D^2u)), \qquad 1 \leq k \leq k_{0}.
\end{aligned}
\end{equation*}
Note that $s_{k} = G(-t_{k})$ since $G$ is increasing.   

Consider the set
\begin{equation*}
E_{k} := \{ x\in B_{1/2^k} | F_{\cup}(D^2u) \leq t_{k}-\xi \}.
\end{equation*}
Suppose that
\begin{equation}\label{eq: easy case}
 \exists \, 1\leq \ell \leq k_{0} \text{ so that } \quad |E_{k}| \leq \delta |B_{1/2^{\ell}}|.
\end{equation}
Let $w_{\ell}(x) = 2^{2\ell}u\left(\frac{x}{2^{\ell}}\right)$, and let $\upsilon$ solve the equation
\begin{equation*}
\left\{
\begin{array}{ll}
F_{\cup}(D^2 \upsilon(x)) = t_{\ell} &x \in B_{1}\\
\upsilon(x) = w_{\ell}(x) &x \in \del B_{1}.
\end{array}
\right.
\end{equation*}
Note that $\upsilon$ exists by standard elliptic theory (see, e.g. \cite[Chapter 9]{CC}).  Since $F_{\cup}$ is uniformly elliptic, the Alexandroff-Bakelman-Pucci (see, for instance, \cite[Theorem 2.21]{FH}, \cite[Theorem 3.2]{CC}) estimate implies
\begin{equation}\label{eq: ABP sup est}
\begin{aligned}
\|\upsilon - w_{\ell}\|_{L^{\infty}(B_{1})} &\leq C\|F_{\cup}(D^2\upsilon) - F_{\cup}(D^{2}w_{\ell})\|_{L^{n}(B_{1})} \\
&\leq C(n) (\xi^{n} + \delta)^{1/n}.
\end{aligned}
\end{equation}
Set $\hat{\upsilon} = \upsilon - w_{\ell}(0) - \langle \nabla w_{\ell}(0), x \rangle$, and observe that $F_{\cup}(D^{2} \hat{\upsilon}) = t_{\ell}$.  Since $F_{\cup}$ is uniformly elliptic and convex we can apply the usual Evans-Krylov Theorem \cite{E, Kr1, Kr2}, \cite[Theorem 6.6]{CC}, \cite{GT}, to find a constant $\beta = \beta(\lambda, \Lambda, n) \in (0,1)$ depending only on universal data such that
\begin{equation}\label{eq: EK est}
|\hat{\upsilon}|_{C^{2,\beta}(B_{1/2})} \leq C \left(\|\hat{\upsilon}\|_{L^{\infty}(B_{1})} + |F_{\cup}(0)-t_{\ell}| \right).
\end{equation} 
By the uniform ellipticity of $F_{\cup}$, we have
\begin{equation}\label{eq: sum est}
|t_{\ell} - F_{\cup}(0)| \leq \Lambda \|D^{2}u\|_{L^{\infty}(B_{1})}.
\end{equation}
Furthermore, applying the Alexandroff-Bakelman-Pucci estimate \cite[Theorem 3.2]{CC} to the uniformly elliptic equation $F_{\cup}(D^{2}\hat{\upsilon})= t_{\ell}$ we obtain
\begin{equation}\label{eq: ups ABP}
\begin{aligned}
\|\hat{\upsilon}\|_{L^{\infty}(B_{1})} &\leq C \left[\sup_{x\in \del B_{1}} |\hat{\upsilon}(x)| + |t_{\ell}|\right]\\
&= C \left[\sup_{x\in \del B_{1}} |w_{\ell}(x) -w_{\ell}(0) - \langle \nabla w_{\ell}(0), x \rangle| + |t_{\ell}|\right]\\
&\leq C \left(\|D^{2}w_{\ell}\|_{L^{\infty}(B_{1})} + |t_{\ell}| \right)\\
&= C \left(\|D^{2}u\|_{L^{\infty}(B_{1})} + |t_{\ell}|\right).
\end{aligned}
\end{equation}
Combining estimates ~\eqref{eq: EK est},~\eqref{eq: sum est} and~\eqref{eq: ups ABP} 
\begin{equation*}
\|D^{2}\upsilon\|_{C^{\beta}(B_{1/2})} = \|D^{2}\hat{\upsilon}\|_{C^{\beta}(B_{1/2})} \leq C\left(\|D^2u\|_{L^{\infty}(B_{1})} + |F_{\cup}(0)| \right).
\end{equation*}
Let $P$ denote the quadratic part of $\upsilon$ at the origin.  Then, for $x \in B_{1/2}$ we have
\begin{equation}\label{eq: poly approx 1}
\begin{aligned}
|w_{\ell}-P|(x) &\leq |w_{\ell} - \upsilon|(x) + |\upsilon - P|(x) \\
&\leq C(\xi^{n} + \delta)^{1/n} + \|D^{2}\upsilon\|_{C^{\beta}(B_{1/2})}|x|^{2+\beta} \\
&\leq C(\xi^{n} + \delta)^{1/n} + C |x|^{2+\beta}.
\end{aligned}
\end{equation}

Before proceeding, let us address the case when the assumption in~\eqref{eq: easy case} does not hold.  Namely, assume that
\begin{equation}\label{eq: hard case}
\forall 1\leq \ell \leq k_{0}, \quad |E_{\ell}| \geq \delta |B_{1/2^{\ell}}|.
\end{equation}
Define $w_{k}(x) = 2^{2k}u\left(\frac{x}{2^{k}}\right)$.  By Lemma~\ref{lem: key lem}, we have
\begin{equation*}
L[G(F_{\cap}(D^{2}w_{k}))- s_{k}] \leq 0.
\end{equation*}
We now apply the weak Harnack inequality repeatedly to show that $G(F_{\cap}(D^{2}w_{k}))$ must concentrate in measure near a level set, for $k_{0}$ sufficiently large, depending only on $\xi, \delta, \Gamma$.  This will imply that assumption~\eqref{eq: easy case} always holds.  

The weak Harnack inequality \cite[Theorem 4.8]{CC} implies that, for $x\in B_{1/2}$
\begin{equation*}
G(F_{\cap}(D^{2}w_{k}))(x)-s_{k} \geq c(n,\lambda) \| G(F_{\cap}(D^{2}w_{k})) -s_{k} \|_{L^{p_{0}}(B_{1})},
\end{equation*}
where $p_{0} = p_{0}(n,\lambda, \Lambda)$.  Since $G$ is concave and increasing, the definition of weak convexity implies that
\begin{equation*}
G(F_{\cap}(D^{2}w_{k})) \geq s_{k} +\gamma \xi \quad \forall x\in E_{k},
\end{equation*}
In particular, thanks to~\eqref{eq: hard case} we obtain
\begin{equation*}
s_{k+1} \geq s_{k} + c(n,\lambda) \gamma \xi \delta^{1/p_{0}} =: s_{k}+\theta.
\end{equation*}
It follows immediately that after 
\[
k_{0} = \frac{{\rm Osc}_{B_{1}} G(F_{\cap}(D^2u))}{\theta} =: \frac{\Gamma}{\theta}
\]
iterations it must hold that $|E_{k_{0}}| \leq \delta |B_{1/2^{k_{0}}}|$, for otherwise 
\[
s_{k_{0}+1} \geq \sup_{B_{1}}G(F_{\cap}(D^2u))
\]
which is absurd.  In particular, assumption~\eqref{eq: easy case} always holds provided we take $k_{0}$ large enough, depending on $\xi, \delta, \gamma, \Gamma$. 

In conclusion, by~\eqref{eq: poly approx 1}, there is an $1 \leq \ell \leq k_{0}$, and a polynomial $P$ such that
\begin{equation*}
|w_{\ell}-P|(x) \leq C(\xi^{n} + \delta)^{1/n} + C |x|^{2+\beta}.
\end{equation*}
Set $x = \rho y$, and $\hat{P}(y) = \rho^{-2}P(\rho y)$, then we have
\begin{equation*}
\left| \frac{1}{\rho^{2}} w_{\ell}(\rho y) - \hat{P}(y)\right| \leq C \frac{(\xi^{n} + \delta)^{1/n}}{\rho^{2}} + C \rho^{\beta}
\end{equation*}
for $|y| \leq 1$.  We may now apply \cite[Lemma 2]{CY} to obtain a polynomial $\tilde{P}$ such that $F(D^{2}\tilde{P}) =0 $, and 
\begin{equation*}
\left| \frac{1}{\rho^{2}} w_{\ell}(\rho y) - \tilde{P}(y)\right| \leq C \frac{(\xi^{n} + \delta)^{1/n}}{\rho^{2}} + C \rho^{\beta}.
\end{equation*}
We now choose $\rho$, then $\xi, \delta$ (which determine $k_{0}$), depending on $n, \lambda, \Lambda, \gamma, \epsilon$ such that
\begin{equation*}
\left| \frac{1}{\eta^{2}} u(\eta y) - \tilde{P}(y)\right| \leq \epsilon
\end{equation*}
where $\eta = \eta(n, \lambda, \Lambda, \gamma, \epsilon) = \rho/2^{\ell}$, and $1\leq \ell \leq k_{0}$.
\end{proof}

The conclusion of Theorem~\ref{thm: main} follows immediately from Proposition~\ref{prop: main EK}, either by appealing directly to Savin's small perturbations theorem~\cite[Theorem 1.3]{S}, or by following the argument in Caffarelli-Yuan \cite{CY}.

\section{Applications and Extensions}

In this section we indicate a few applications of Theorem~\ref{thm: main}.  The first application is a rigidity result (Lemma~\ref{lem: Lois}) for solutions of twisted type equations on $\mathbb{R}^{n}$.  As an application of this rigidity, we extend Theorem~\ref{thm: main} to a more general class of equations in  Theorem~\ref{thm: appl}, which is the main result of this section.  Theorem~\ref{thm: appl} may be of interest to geometers. 

\begin{lem}\label{lem: Lois}
Suppose $u: \mathbb{R}^{n} \rightarrow \mathbb{R}$ is a smooth function with $|D^{2}u|_{L^{\infty}(\mathbb{R}^{n})} \leq K < \infty$.  Suppose that $u$ solves the twisted type equation $F(D^{2}u) =0$ on $\mathbb{R}^{n}$.  Then $u$ is a quadratic polynomial.
\end{lem}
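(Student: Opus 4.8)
The plan is to run the standard blow-down argument: apply the interior estimate of Theorem~\ref{thm: main} on balls $B_R$ with $R\to\infty$, and exploit the scaling invariance of the equation to force the $C^\alpha$ seminorm of $D^2u$ on any fixed ball to tend to zero.

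First I would record the scaling invariance. For $R>0$ set $v_R(x):=R^{-2}u(Rx)$ for $x\in B_1$. Since $D^2v_R(x)=D^2u(Rx)$, the function $v_R$ solves $F(D^2v_R)=0$ on $B_1$, and $\|D^2v_R\|_{L^\infty(B_1)}=\|D^2u\|_{L^\infty(B_R)}\le K$. Because the equation $F(D^2u)=0$ involves neither $x$, nor $u$, nor $Du$ explicitly, the decomposition $F=F_\cup+F_\cap$, the auxiliary data $\mathcal{O},U,G,Q$, and the ellipticity constants $\lambda,\Lambda$ are literally unchanged under this rescaling; in particular each $v_R$ is a solution of a twisted type equation with the same structural constants.

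The key point is that the constants appearing in Theorem~\ref{thm: main}, applied to $v_R$, can be chosen independently of $R$. Indeed, $\overline{D^2u(\mathbb{R}^n)}$ is a compact subset of $S^{n\times n}$ by the hypothesis $\|D^2u\|_{L^\infty(\mathbb{R}^n)}\le K$; hence the image $F_\cap(D^2v_R)(B_1)$ is contained in $F_\cap(D^2u(\mathbb{R}^n))\subset\mathrm{int}\,U$, and $F_\cup,F_\cap$ are $C^2$ on a fixed open neighborhood $\mathcal{O}$ of this compact set. Moreover, by the last sentence of Theorem~\ref{thm: main}, the quantities $\gamma$ and $\Gamma$ for $v_R$ are controlled by $\Lambda$, $K$, and the fixed functions $G,Q$; concretely $\gamma\ge\inf_{x\in F_\cup(D^2u)(\mathbb{R}^n)}Q'(-x)>0$ and $\Gamma\le\mathrm{Osc}_{\mathbb{R}^n}G(-F_\cup(D^2u))<\infty$, both independent of $R$. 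Therefore Theorem~\ref{thm: main} yields a constant $C=C(n,\lambda,\Lambda,\alpha,K,F_\cup(0),G,Q)$, independent of $R$, with $\|D^2v_R\|_{C^\alpha(B_{1/2})}\le C$.

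Finally I would undo the scaling: writing $x'=Rx$ one checks $[D^2v_R]_{C^\alpha(B_{1/2})}=R^\alpha\,[D^2u]_{C^\alpha(B_{R/2})}$, whence $[D^2u]_{C^\alpha(B_{R/2})}\le C R^{-\alpha}$. Fixing any $p,q\in\mathbb{R}^n$ and letting $R\to\infty$ (so that eventually $p,q\in B_{R/2}$) forces $D^2u(p)=D^2u(q)$; thus $D^2u$ is constant on $\mathbb{R}^n$ and $u$ is a quadratic polynomial. The main obstacle — indeed essentially the only thing requiring care — is the uniformity in $R$ of all the hypotheses of Theorem~\ref{thm: main}, which is precisely where the global bound $\|D^2u\|_{L^\infty(\mathbb{R}^n)}\le K$ enters: it makes $\overline{D^2u(\mathbb{R}^n)}$ a fixed compact set on which all the structural data ($\mathcal{O}$, $U$, the ellipticity constants, $\gamma$, $\Gamma$) are simultaneously well-behaved.
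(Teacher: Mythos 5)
Your proposal is correct and is essentially identical to the paper's own proof: the same rescaling $v_R(x)=R^{-2}u(Rx)$, the same application of Theorem~\ref{thm: main} with constants uniform in $R$, and the same conclusion $[D^2u]_{C^\alpha(B_{R/2})}\le CR^{-\alpha}\to 0$. Your extra paragraph justifying the $R$-independence of $\gamma$, $\Gamma$ and the structural data via the global bound $\|D^2u\|_{L^\infty(\mathbb{R}^n)}\le K$ is a point the paper leaves implicit, and it is correctly handled.
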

\begin{proof}
Without loss of generality we may assume that $u(0)=0$, $\nabla u(0) =0$.  Since $|D^{2}u| \leq K$, we have the estimate $|\nabla u(x)| \leq K|x|$. For $R >0$ we define
\[
v_{R}(x) := R^{-2}u(Rx).
\]
Then $v_{R}(x)$ satisfies $F(D^{2}v_{R}) =0$ on $B_{1}$, together with the bounds
\[
|\nabla v_{R}(x)|_{L^{\infty}(B_{1})} \leq \Lambda, \qquad |D^{2}v_{R}|_{L^{\infty}(B_{1})} \leq K.
\]
By Theorem~\ref{thm: main} we have an estimate $|D^{2}v_{R}|_{C^{\alpha}(B_{1/2})} \leq C$ for a constant $C$ independent of $R$. Writing this in terms of $u$ gives
\[
R^{\alpha} |D^{2}u|_{C^{\alpha}(B_{R/2})} \leq C.
\]
It follows that $D^{2}u$ is a constant, and hence $u$ is a quadratic polynomial.
\end{proof}

We can now prove an extension of Theorem~\ref{thm: main}.

\begin{thm}\label{thm: appl}
Suppose $u$ is a smooth solution of the uniformly elliptic equation
\begin{equation}\label{eq: Fx eq}
F(D^{2}u, x) =0, \qquad x \in B_{2}.
\end{equation}
Assume that $F(\cdot, \cdot)$ is smooth in both slots, and that $F(\cdot, x)$ is of twisted type for each $x \in B_{2}$.  Then for every $0<\alpha<1$ we have the estimate
\begin{equation*}
\|D^{2}u\|_{C^{\alpha}(B_{1/2})} \leq C
\end{equation*}
where $C$ depends on $F, \alpha, \|D^{2}u\|_{L^{\infty}(B_{2})}$.
\end{thm}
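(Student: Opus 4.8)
The plan is to reduce Theorem~\ref{thm: appl} to the constant-coefficient case (Theorem~\ref{thm: main}) via the standard freezing-coefficients argument, using the rigidity Lemma~\ref{lem: Lois} to handle the blow-up limit. First I would set up a compactness/approximation scheme at an arbitrary point $x_0 \in B_1$: after translating, one wants to compare $u$ on a small ball $B_r(x_0)$ with a solution $w$ of the frozen equation $F(D^2 w, x_0) = 0$ sharing the same boundary data. Since $F(\cdot, x_0)$ is of twisted type, its linearization is uniformly elliptic, and the difference $u - w$ solves a linear uniformly elliptic equation with right-hand side $F(D^2 u, x_0) = F(D^2u, x_0) - F(D^2u, x) $, which is $O(|x - x_0|)$ by smoothness of $F$ in the second slot and the $L^\infty$ bound on $D^2 u$; the ABP estimate then gives $\|u - w\|_{L^\infty(B_r(x_0))} \le C r^{n+1}$ (or at least $C r^{2+\mu}$ for some $\mu>0$ after rescaling).

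The key step is the interior $C^{2,\alpha}$ bound for $w$, which is exactly Theorem~\ref{thm: main} applied to the twisted-type equation $F(\cdot, x_0) = 0$; one must check the constants there are uniform in $x_0 \in B_1$. This is where smoothness of $F$ in $x$ and compactness of $\overline{B_1}$ enter: the ellipticity constants $\lambda, \Lambda$, the quantities $\gamma, \Gamma$, and the functions $G, Q$ in Definition~\ref{defn: wk concave} can be chosen to depend continuously on $x_0$, hence are uniformly controlled. With a uniform estimate $\|D^2 w\|_{C^\alpha(B_{r/2}(x_0))} \le C$ in hand, the standard iteration — comparing $u$ to its frozen-coefficient approximant at dyadic scales and summing the geometric errors — yields an interior $C^{2,\alpha'}$ estimate for $u$ for any $\alpha' < \min(\alpha, \mu)$, and then a bootstrap or direct choice gives all $\alpha \in (0,1)$.

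An alternative, and perhaps cleaner, route is the pure compactness argument à la Caffarelli: suppose the estimate fails, so there is a sequence $u_j$, points $x_j$, and scales $r_j \to 0$ with the normalized $C^{2,\alpha}$ quantities blowing up; rescale $v_j(x) = r_j^{-2}(u_j(x_j + r_j x) - \ell_j(x))$ removing the affine part, extract a limit $v_\infty$ on $\mathbb{R}^n$ solving a fixed twisted-type equation $F(D^2 v_\infty, x_\infty) = 0$ with $\|D^2 v_\infty\|_{L^\infty} \le K$; by Lemma~\ref{lem: Lois}, $v_\infty$ is a quadratic polynomial, contradicting the normalization that forced nontrivial $C^{2,\alpha}$ oscillation. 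The compactness of the limiting equation (the structural data of twisted-type equations is stable under the relevant limits) is what makes this work, and it is essentially packaged in Lemma~\ref{lem: Lois} combined with stability of the hypotheses.

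The main obstacle I anticipate is verifying the uniformity of all the structural data in Definition~\ref{defn: twisted type} as $x_0$ varies — in particular that the decomposition $F(\cdot,x) = F_\cup(\cdot,x) + F_\cap(\cdot,x)$, the auxiliary function $G$, and the lower bound $Q$ can be chosen to depend on $x$ in a controlled (say continuous) way so that $\gamma, \Gamma$ stay bounded and bounded away from the bad values on the compact set $\overline{B_1}$. In concrete geometric examples this is transparent, but stating it cleanly in the generality of the definition requires a small amount of care; once that is granted, the rest is the classical perturbation machinery (freezing coefficients, ABP, dyadic iteration), which I would only sketch rather than execute in full.
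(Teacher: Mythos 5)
Your second, compactness-based route is essentially the paper's proof, and it is the better of your two options; the first (freezing coefficients, ABP comparison, dyadic iteration) is a genuinely different strategy that stalls exactly where you say it does, since Definition~\ref{defn: twisted type} gives no control on how the decomposition $F_{\cup}+F_{\cap}$ and the auxiliary data $G$, $Q$, $\gamma$, $\Gamma$ vary with $x_0$. The paper implements the compactness argument by blowing up the scale-invariant quantity $N_u=\sup_{B_1}d_x|D^3u(x)|$ rather than a normalized $C^{2,\alpha}$ seminorm: if $N_{u_n}\to\infty$, the rescalings $\tilde u_n(z)=d_{x_n}^{-2}N_n^{2}\,u_n(x_n+d_{x_n}N_n^{-1}z)$ (minus affine parts) satisfy $\|D^3\tilde u_n\|_{L^\infty}\le |D^3\tilde u_n(0)|=1$ on balls of radius $N_n\to\infty$; this gives uniform $C^{2,1}$ bounds for free, hence $C^{3,\alpha}$ compactness via Schauder applied to the differentiated equation, and the limit solves the single frozen equation $F(D^2\tilde u, x_\infty)=0$ on all of $\mathbb{R}^n$, so Lemma~\ref{lem: Lois} forces it to be a quadratic polynomial, contradicting $|D^3\tilde u(0)|=1$. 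Two points of comparison with your sketch: (i) normalizing at the level of $D^3$ (legitimate here because solutions are assumed smooth) sidesteps the equicontinuity issue you would face in extracting a $C^2$-convergent subsequence from a sequence normalized only in $C^{2,\alpha}$; (ii) the compactness route needs no uniformity of the twisted-type structure in $x$ --- only the one operator $F(\cdot,x_\infty)$ enters the limit, and it is of twisted type by hypothesis --- which is precisely why the constant in Theorem~\ref{thm: appl} is allowed to depend on $F$ non-explicitly, and why your worry about uniform constants, while fatal to a quantitative perturbation proof in this generality, is harmless for the qualitative blow-up proof.
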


\begin{proof}
Set
\[
N_{u} = \sup_{B_{1}} d_{x} |D^{3}u(x)|, \qquad d_{x} = d(x, \del B_{1}),
\]
and assume that the supremum is achieved at some point $x_0 \in B_{1}$.  Define
\begin{equation}
\tilde{u}(z) = d_{x_{0}}^{-2} N_{u}^{2}u\left(x_{0} + d_{x_0}N_{u}^{-1} z \right) -A - A_{i}z_{i}.
\end{equation}
where $A, A_{i}$ are chose so that $\tilde{u}(0) =0$ and $ \nabla \tilde{u}(0)= 0$.  The function $\tilde{u}(z)$ is defined on $B_{N_{u}}(0)$, and satisfies
\[
D^{2}\tilde{u} = D^{2}u, \qquad \|D^{3}\tilde{u}\|_{L^{\infty}(B_{N_{u}(0)})} = |D^{3}\tilde{u}(0)| = 1.
\]
Furthermore, $\tilde{u}$ solves the equation
\begin{equation}
F(D^{2}\tilde{u}, x_{0} + d_{x_0}N_{u}^{-1} z ) =0, 
\end{equation}
on $B_{N_{u}}(0)$.

In order to prove the theorem, we use a contradiction argument, together with this rescaling.  Suppose there exists a sequence $u_{n}$ of functions on $B_{2}$ solving \eqref{eq: Fx eq}, and having a uniform bound with $\|D^{2}u_{n}\|_{L^{\infty}(B_{2})} \leq K$.  For the sake of obtaining a contradiction, we assume that the sequence $\{N_{n}\}$ increases to $+\infty$ .  Let $x_{n} \in B_{1}$ be the point where $N_{n}$ is achieved.  By passing to a subsequence (not relabeled) we may assume that $\{x_{n}\}$ converges to a point $x_{\infty} \in \overline{B_{1}}$.  Using the above rescaling we get a sequence $\tilde{u}_{n}$ solving the equation
\begin{equation}
F(D^{2}\tilde{u}_{n}, x_{n} + d_{x_n}N_{u_{n}}^{-1} z ) =0,
\end{equation}
on $B_{N_{n}}(0)$.  Furthermore, for every fixed $k$, the functions $u_{n}, n\geq k$ are defined on $B_{N_{k}}(0)$ and uniformly bounded in $C^{3}(B_{N_{k}}(0))$.  Since $F$ is smooth in both slots, and uniformly elliptic, it follows from the Schauder theory that $\{\tilde{u}_{n}\}_{n\geq k}$ are uniformly bounded in $C^{3,\alpha}(B_{N_{k}/2}(0))$.  Since $N_{n} \rightarrow \infty$ by assumption, we may find a function $\tilde{u} :\mathbb{R}^{n}\rightarrow \mathbb{R}$ and a diagonal subsequence (not relabelled) so that $\{u_{n}\}$ converges uniformly to $\tilde{u}$ in $C^{3,\alpha/2}$ on compact subsets of $\mathbb{R}^{n}$.  In particular, we have $|D^{3}\tilde{u}|(0)=1$.   Furthermore, $\tilde{u}$ satisfies
\[
 F(D^{2}\tilde{u}, x_{\infty}) =0.
\]
Applying the Shauder theory again, we conclude that $\tilde{u} \in C^{\infty}(\mathbb{R}^{n})$.  But, since $F( \cdot, x_{\infty})$ is of twisted type, Lemma~\ref{lem: Lois} implies that $\tilde{u}$ is a quadratic polynomial, which contradicts $|D^{3}\tilde{u}(0)| =1$.

We conclude that $u$ must have $N_{u} <C$, for some $C$ depending on $F$, and $|D^{2}u|_{L^{\infty}(B_{2})}$.  The theorem follows immediately.
\end{proof}

{\bf Acknowledgements}  I would like to thank G. Sz\'ekelyhidi, V.Tosatti and C. Mooney for several helpful conversations.  I am grateful to D.H. Phong and S.-T. Yau for their encouragement and support.

\end{document}